\newenvironment{@abssec}[1]{%
    \if@twocolumn

      \section*{#1}%
    \else

      \vspace{.05in}\footnotesize
      \parindent .2in
 {\upshape\bfseries #1. }\ignorespaces
    \fi}
\par\vspace{.1in}\fi}
\newenvironment{keywords}{\begin{@abssec}{\keywordsname}}{\end{@abssec}}
\newenvironment{AMS}{\begin{@abssec}{\AMSname}}{\end{@abssec}}
\newcommand\keywordsname{Key words}
\newcommand\AMSname{AMS subject classifications}
\newcommand\AMname{AMS subject classification}
\newtheorem{theorem}{Theorem}
 \newtheorem{lemma}[theorem]{Lemma}
\title{A construction of patterns with many critical points on topological tori
\footnotetext[3]{This research was partially supported by the Grant-in-Aid
for Scientific Research (B) ($\sharp$ 18H01126) of
Japan Society for the Promotion of Science.}\footnotemark[3]}
\author{Putri Zahra Kamalia\footnote{Corresponding author.} \footnote{Research Center for Pure and Applied Mathematics, Graduate School of Information Sciences, Tohoku
University, Sendai, 980-8579, Japan ({\tt putrizahrakamalia@gmail.com}${}^*$, {\tt  sigersak@tohoku.ac.jp}).}\  \  and \  Shigeru Sakaguchi\footnotemark[2]} 
\date{}
\begin{document}

\maketitle

\begin{abstract}
We consider reaction-diffusion equations on closed surfaces in $\mathbb R^3$ having genus $1$. Stable nonconstant stationary solutions are often called patterns.  The purpose of this paper is to construct closed surfaces together with patterns having as many critical points as one wants.
\end{abstract}
\begin{keywords}
stable solution; pattern; semilinear elliptic equation; reaction-diffusion equation; closed surface having  genus $1$; torus; critical points; 
\end{keywords}

\begin{AMS}
Primary 35B35 ; Secondary 35K57, 35K58, 35J61, 35P15, 35B38, 35K15, 35K20, 35B20
\end{AMS}

%%%%%%%%%%%%%%%%%%%%%
%%%%%%%%%%%%%%%%%%%%%
%%%%%%  1  Introduction    %%%%%%
%%%%%%%%%%%%%%%%%%%%%
%%%%%%%%%%%%%%%%%%%%%
\section{Introduction}
In this paper, we deal with reaction-diffusion problems on closed surfaces properly embedded in $\mathbb R^3$. We will address the existence of stable nonconstant stationary solutions having  critical points as many as possible.  For a closed surface  $M$  properly embedded in $\mathbb R^3$ with a Riemannian metric $g$, we consider the following reaction-diffusion problem  for $u =u(x,t)$ on $M$ 
\begin{equation}\label{eq1}
%\begin{cases}
\partial_t u= \Delta_g u + f(u)\quad \mbox{ in } M \times (0,\infty),
%&\frac{\partial u}{\partial \nu}=0 \text{ on } \partial M_0 \times (0,T)
%\end{cases}
\end{equation}
where $f \in C^1(\mathbb{R})$  is a function of $u$  and $\Delta_g$  denotes  the Laplace-Beltrami operator on $M$,
\begin{equation}\label{gradu}
\Delta_{g} u = \textrm{div} (\nabla_{g} u)= \sum_{i=1}^2 \frac{1}{\sqrt{|g|}} \frac{\partial}{\partial x^i}  \left(\sqrt{|g|} (\nabla_{g} u)^i \right).
\end{equation}
Stable nonconstant stationary solutions of \eqref{eq1} are often called patterns. The stability of stationary solutions  of \eqref{eq1} is discussed in the sense of Lyapunov. We say that a stationary solution U  is stable if, for each $\epsilon > 0$, there exists a $\delta > 0$ such that, for every initial data $u_0$ with
$\Vert {u_0-U}\Vert_{\infty} < \delta$ we have
$$
\Vert u(\cdot,t)-U\Vert_{\infty} < \epsilon\ \mbox{ for every } t > 0.
$$

The existence and nonexistence of patterns on compact $d$-dimensional Riemannian manifolds have been studied in \cite{bandel, farina, jimbo, nascimento,  rubinstein}. Among these, in \cite[Theorem 2]{jimbo} Jimbo has introduced manifolds and nonlinearities $f$ having complex patterns whose construction is analogous to that in \cite{matano}.  To be precise, Jimbo introduced a finite number of compact Riemannian manifolds connected by thin channels, and he used the singular perturbation method as all the thin channels shrink to line segments.  The resulting patterns are close to each given constant on each compact Riemannian manifold. 

In this paper, by using the patterns constructed in \cite{bandel} together with the implicit function theorem,  we introduce a different and simple way to construct closed surfaces having genus $1$ together with patterns having as many critical points as one wants.  Both our closed surfaces and our patterns are different from those obtained in  \cite[Theorem 2]{jimbo}.  We will explain their construction  below.

For a surface of revolution $D$ in $\mathbb R^3$ with  boundary $\partial D$, 
there are several studies on the existence of patterns of \eqref{eq1} where $M$ is replaced with $D$ under various boundary conditions on $\partial D$.  In  \cite{bandel} Bandle, Punzo and Tesei constructed a class of surfaces of revolution $D$ with non-empty boundary and nonlinear terms $f$ having patterns with the Neumann boundary condition by solving some ordinary differential equations with the aid of an idea introduced by Yanagida in \cite{yanagida} to  construct the nonlinear terms $f$.  Recently,  in \cite{sonego} Sonego obtained patterns with the Dirichlet boundary condition on $\partial D$.  On the other hand, in \cite{punzo} Punzo  considered the case where $\partial D = \emptyset$, that is,  $D$ is a closed surface of genus $0$,  and shows the existence of patterns on $D$.

 On the surface of revolution $D$, the reaction-diffusion problem with the Neumann boundary condition reads as
\begin{equation}\label{rdn}
\begin{cases}
\partial_t u= \Delta_g u + f(u) &\mbox{ in } D \times (0,\infty),\\
\dfrac{\partial u}{\partial \nu}=0 &\mbox{ on } \partial D \times (0,\infty),
\end{cases}
\end{equation}
where $\nu$ denotes the outward unit normal vector to $\partial D$. We prove in this paper that the patterns of problem \eqref{rdn} still exist even if the domain $D$ dealt with in \cite{bandel} is slightly perturbed. The perturbation is done by deforming its axis of rotation into a circular arc. We start with the patterns constructed in \cite{bandel}  and examine the existence of patterns with the Neumann boundary condition on the perturbed domain by using the implicit function theorem. Furthermore, we manage to construct patterns together with surfaces of genus $1$  by attaching a finite number of copies of the perturbed domain together with the perturbed pattern. This construction comes from an idea of Below and Lubary in \cite[Theorem 4.2, p.177]{below}  which introduces patterns on a {\it disjoint union } of graphs. As a consequence of this construction, the Neumann boundary condition on the boundary of each copy of the perturbed domain yields the critical points of the resulting patterns. We summarize our result in the following theorem.
\begin{theorem}\label{mt}
	There exist a nonlinearity $f$ and a number $N \in \mathbb{N}$ such that, for each $n \geq N$, a closed surface $M$ of genus $1$ properly embedded in $\mathbb{R}^3$ together with a pattern $U$  of \eqref{eq1} on $M$, is constructed in such a way that $U$ has at least $4n$ critical points.
\end{theorem}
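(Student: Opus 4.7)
The plan is to build the construction from a single Bandle--Punzo--Tesei surface of revolution $D$, nonlinearity $f$, and stable stationary solution $u_0$ of the Neumann problem on $D$, depending only on the arc-length parameter $s$ along the (straight) axis of length $L$. By exploiting the freedom in the Yanagida-type construction used in \cite{bandel}, I would arrange $u_0$ to be symmetric under $s \mapsto L - s$, so that its one-dimensional profile is critical at $s = 0$, $s = L/2$, and $s = L$; hence $u_0$ has three critical circles on $D$. The first step is then to deform $D$ by bending its axis into a circular arc of small curvature $\kappa$, producing $D_\kappa$. Pulling the Neumann stationary problem on $D_\kappa$ back to $D$ via the bending diffeomorphism turns it into a $\kappa$-dependent family of elliptic problems on $D$ with $u_0$ as the $\kappa = 0$ solution. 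The implicit function theorem in $C^{2,\alpha}$, restricted if necessary to the subspace of reflection-symmetric functions, then produces a perturbed pattern $u_\kappa$ on $D_\kappa$ for every sufficiently small $\kappa$, provided $0$ is not an eigenvalue of the linearised Neumann operator $-\Delta_g - f'(u_0)$ at $u_0$; stability of the Bandle--Punzo--Tesei pattern, together with a separate argument ruling out a zero eigenvalue if needed, supplies this invertibility, and continuity of the spectrum in $\kappa$ preserves stability of $u_\kappa$.

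Given $u_\kappa$, I would next fix $\kappa = 2\pi/(nL)$, so that the bent axis subtends exactly $2\pi/n$ at its centre of curvature. Then $n$ congruent rigid copies of $D_\kappa$, obtained by successive rotations through $2\pi/n$ about this centre, fit together edge-to-edge to form a closed surface $M \subset \mathbb{R}^3$; because the bent axes join into a full circle, $M$ has genus $1$. I define $U$ on $M$ by placing the appropriate rotated copy of $u_\kappa$ on each piece. The reflection symmetry of $u_\kappa$ inherited from the $s \mapsto L - s$ symmetry of $u_0$ ensures that the function values agree across each shared junction circle, and the Neumann condition $\partial_\nu u_\kappa = 0$ gives $C^1$ matching; a standard reflection-extension and unique continuation argument for the underlying elliptic equation then upgrades this to $C^\infty$, so that $U$ is a genuine classical stationary solution of \eqref{eq1} on $M$. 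Stability of $U$ is inherited from the stability of $u_\kappa$ on each piece, by decomposing any candidate unstable test function according to the junction reflection symmetries and restricting to a single piece.

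To count critical points, I would observe that bending destroys the continuous rotational symmetry of $u_0$ but preserves a single reflection, namely across the plane of the bent axis arc. On each of the three critical circles of $u_0$ (at $s = 0, L/2, L$) this remaining reflection has exactly two fixed points, and generically the perturbed pattern $u_\kappa$ has a critical point at each of these two fixed points and nowhere else on that circle. The circles $s = 0$ and $s = L$ become shared junction circles of $M$; there are $n$ of them, each carrying $2$ critical points of $U$, for $2n$ in total. The circle $s = L/2$ lies in the interior of each piece and is not identified with anything, contributing $2$ critical points per piece, for another $2n$. Summing, $U$ has at least $4n$ critical points, and one takes $N$ to be the smallest integer for which $\kappa = 2\pi/(NL)$ falls into the range of validity of the implicit function theorem. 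The principal technical obstacles are the invertibility of the linearised Neumann operator at $u_0$ and the $C^1$-to-$C^\infty$ upgrade across the junction circles; both hinge on carefully exploiting the reflection symmetries together with unique continuation for the elliptic equation.
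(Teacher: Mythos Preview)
Your overall architecture---bend the axis into a circular arc, use the implicit function theorem to perturb the Bandle--Punzo--Tesei pattern, show stability persists by continuity of the principal eigenvalue, then glue rotated copies---matches the paper. The gap is your assumption that the base pattern $u_0$ can be arranged to be symmetric under $s \mapsto L - s$, with an interior critical circle at $s = L/2$. The pattern supplied by \cite{bandel} is explicitly \emph{strictly increasing} on $(0,L)$ (see Section~2 of the paper), hence has no interior critical points and is not reflection-symmetric; it is not at all clear that the Yanagida-type freedom lets you manufacture a non-monotone symmetric pattern while keeping $\lambda_1>0$. Without that symmetry your end-to-end gluing of $n$ rotated copies does not match values across the junctions, and your interior $2n$ critical points are unavailable.

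The paper sidesteps this entirely by keeping the monotone $U_\kappa$ and building the needed symmetry into the gluing rather than into the pattern: it first reflects one bent piece $M_\kappa$ across a boundary circle to form a symmetric double $M_*$, then glues $n$ copies of $M_*$ (i.e.\ $2n$ copies of $M_\kappa$) around the circle. The resulting $U$ is automatically symmetric across each of the $2n$ junction circles, where the Neumann condition yields at least two critical points per circle, giving $4n$ in total; no interior critical circle is invoked. Stability on $M$ is also handled differently from your spectral-decomposition sketch: the paper sandwiches arbitrary initial data between the symmetric data $U\pm\delta$, observes that these evolve symmetrically and hence satisfy the Neumann problem on each $M_\kappa$, and then appeals to the stability of $U_\kappa$ there together with the comparison principle.
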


This paper is organized as follows. In section 2, we introduce the patterns on surfaces of revolution $D$ constructed in \cite{bandel}. %we start with a preliminary result on the stability of problem (\ref{eq1}) on a surface of revolution $D$ due to \cite{bandel}.
Then, we reparameterize the domain in order to adjust it to our problem.  In section 3 we give a detail of construction of surfaces of genus  $1$. Section 4 is devoted to the proof of Theorem \ref{mt}.

\setcounter{equation}{0}
\setcounter{theorem}{0}
%%%%%%%%%%%%%%%%%%%%%
%%%%%%%%%%%%%%%%%%%%%
%%%%%%  2   Preliminary      %%%%%
%%%%%%%%%%%%%%%%%%%%%
%%%%%%%%%%%%%%%%%%%%%
\section{Preliminary}

As in \cite[subsection 2.2 (p. 36) and Theorem  3.6 (p. 40)]{bandel}, let $C$ be a regular plane curve parameterized by 
\begin{equation*}
\begin{cases}
x_1=\psi(r),\\
x_2=0,  \qquad \left( r \in [0,L] \right) \\ 
x_3=\chi(r),
\end{cases}
\end{equation*}
where $\psi, \chi \in C^3([0,L])$, $\psi >0$ in $[0,L]$ and $(\psi^\prime)^2+(\chi^\prime)^2 =1$.
Then by revolving  the curve $C$ about the $x_3$-axis, it admits a surface of revolution $D$ in $\mathbb{R}^3$ parameterized by
\begin{equation}
\begin{cases}
x_1=\psi(r) \cos \theta, \\
x_2=\psi(r) \sin \theta,   \qquad \left( (r,\theta) \in [0,L]\times (0,2\pi] \right)\\ 
x_3=\chi(r)
\end{cases}
\end{equation} 
with local coordinates $x^1=r, x^2=\theta$. For $r \in [0,L]$ , set 
\begin{equation}
C_r= \lbrace  (\psi(r) \cos \theta, \psi(r) \sin \theta, \chi(r)): \theta \in (0,2\pi]\rbrace.
\end{equation}
Then, $\partial D= C_0 \cup C_L$.

The Riemannian metric $g=(g_{ij})$ on $D$ is given by
\begin{equation*}
(g_{ij})_{i,j=1,2}=
\begin{pmatrix}
1 & 0 \\
0 & \psi^2(r)
\end{pmatrix},
\end{equation*}
the area element on $D$ is $dV_g=\sqrt{|g|} dr d\theta=\psi dr d\theta$, and
the Riemannian gradient $\nabla_g u$ of $u$ with respect to $g$ on $D$ is  given by
 \begin{equation*}
 \nabla_{g} u =
 \begin{pmatrix}
 \partial_r u \\[0.25cm]
 \dfrac{1}{\psi^2(r)} \partial_\theta u
 \end{pmatrix}.
 \end{equation*}
The study of existence and nonexistence of patterns of the reaction-diffusion problem \eqref{rdn} has already been done by \cite{bandel}. 
Indeed, it is shown in \cite[Theorem 4.1, p. 41]{bandel} that the existence of an interior point $\hat{R} \in (0,L)$ with $\left(\frac{\psi^\prime}{\psi}\right)^\prime (\hat{R}) > 0$ leads to the existence of a nonlinear term $f\in C^1(\mathbb{R})$ admitting patterns. First, they consider the Cauchy problem that fulfills the above criteria to construct $f$ as a $C^1$-function. The nonlinear term $f=f(Z)$ is defined by \cite[(4.10), p. 43]{bandel} in such way, that for any $r \in(0,L)$, 
\begin{equation*}
f[Z(r)]= -\dfrac{(\psi Z^\prime)^\prime}{\psi}(r),
\end{equation*}
where $Z(r)$ is a stationary nonconstant solution of problem \eqref{rdn} \cite[Lemma 4.4 and (4.13), p. 43]{bandel}. Moreover, $Z(r)$ is positive and strictly increasing in $(0,L)$. Then, the stability of $Z(r)$ is established by \cite[Lemma 4.2, p. 41]{bandel}. Hence, problem \eqref{rdn} with the nonlinear term $f$ admits a pattern $Z(r)$  on $D$.

In particular we choose D to have
\begin{align}
\chi^\prime(r) > 0 &\textrm{ for } r\in [0,L],  \label{as1}\\
\dfrac{d^{i} \psi}{dr^i}(r)=0 &\textrm{ for } r\in \{ 0,L \} \textrm{ and } i=1,3. \label{as2}
\end{align}

Let us introduce the change of variables
\begin{align*}
\Psi (s)=\psi (\chi^{-1}(s)), \quad s =\chi (r),
\end{align*}
where
\begin{equation*}
r=\int\limits_{0}^{s} \sqrt{1 + (\Psi^\prime (t))^2 }dt, \quad  s\in[0,l].
\end{equation*}
Notice in particular that 
\begin{equation}
\label{change of variable from r to s}
\chi^\prime(r) = \frac{ds}{dr} = \frac 1{\sqrt{1 +  (\Psi^\prime(s))^2}}.
\end{equation}
Then, we reparameterize $D$ as
\begin{equation} \label{surev}
\begin{cases}
x_1=\Psi(s) \cos \theta, \\
x_2=\Psi(s) \sin \theta,   \hspace{1cm} ( (s, \theta) \in I:=[0,l] \times S^1)\\ 
x_3=s.
\end{cases}
\end{equation}
The corresponding new Riemannian metric $g=(g_{ij})$ on $D$ is given by
\begin{equation}\label{rmetric}
(g_{ij})_{i,j=1,2}=%((\Psi^\prime(s))^2+1) ds + \Psi^2(s) d\theta
\begin{pmatrix}
1+(\Psi^\prime(s))^2 & 0\\
0 & \Psi^2(s)
\end{pmatrix}.
\end{equation}
Hence, the new Riemannian gradient $\nabla_{g} u$ of $u$ with respect to $g$ is
\begin{equation}\label{rgrad}
\nabla_{g} u =
 \begin{pmatrix}
   \dfrac{1}{1+(\Psi^\prime(s))^2 } \partial_s u \\[0.5cm] 
   \dfrac{1}{\Psi^2(s)} \partial_\theta u \end{pmatrix},
\end{equation}
and the Laplace-Beltrami operator $\Delta_g$ on $D$ is given by
\begin{equation}\label{lbk}
\Delta_{g} u= \frac{1}{1+(\Psi^\prime)^2} u_{ss} + \frac{1}{\Psi^2} u_{\theta \theta} +\frac{[1+ (\Psi^\prime)^2- \Psi^{\prime\prime}\Psi]\Psi^\prime}{[1+(\Psi^\prime)^2]^2 \Psi} u_s.
\end{equation}
%\frac{1}{1+(\Psi^\prime)^2} \frac{\Psi^\prime}{\Psi} u_s + \frac{1}{1+(\Psi^\prime)^2} u_{ss} - \frac{ \Psi^\prime \Psi^{\prime\prime}}{[1+(\Psi^\prime)^2]^2} u_s+\frac{1}{\Psi^2} u_{\theta \theta}.
%\end{equation}
Under this reparametrization, stability criteria in \cite{bandel} can be rewritten as follows:
\begin{theorem}[\cite{bandel}, Theorem 4.1, p. 41] \label{main1}
	Suppose that for some $s_0 \in (0,l)$
		\begin{equation}\label{stas}
		\Psi^{\prime \prime}\Psi - \left(\Psi^\prime\right)^2 \left[1+ (\Psi^\prime)^2\right] > 0 \quad \textrm{ at }s=s_0.
		\end{equation}
		Then, there exists $f\in C^1 (\mathbb{R})$ such that problem \eqref{rdn} admits a pattern $U=U_g(s)$.
\end{theorem}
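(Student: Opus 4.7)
The proposal is to reduce Theorem \ref{main1} to the criterion already established in \cite[Theorem 4.1]{bandel} by means of the change of variables introduced just above the statement, namely $s = \chi(r)$ with $\Psi(s) = \psi(\chi^{-1}(s))$. Under assumption \eqref{as1}, $\chi$ is a strictly increasing $C^3$ map from $[0,L]$ onto $[0,l]$, hence a $C^3$-diffeomorphism, so this change of variables is a smooth reparametrization of the same underlying surface $D$ and, in particular, preserves the problem \eqref{rdn} and its stationary solutions. Therefore it suffices to show that the hypothesis \eqref{stas} at some $s_0 \in (0,l)$ is equivalent to the original Bandle--Punzo--Tesei hypothesis $\left(\psi'/\psi\right)'(\hat R) > 0$ at the corresponding point $\hat R = \chi^{-1}(s_0) \in (0,L)$.

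The key computational step is to express $\left(\psi'/\psi\right)'$ in terms of $\Psi$ and its $s$-derivatives. Using $\psi(r) = \Psi(\chi(r))$, the chain rule, and the relation $(\psi')^2 + (\chi')^2 = 1$ (which yields $\chi'\chi'' = -\psi'\psi''$), one can express $\psi'$ and $\psi''$ entirely in terms of $\Psi'$, $\Psi''$ and $\chi'$, and then use the identity \eqref{change of variable from r to s}, namely $\chi'(r) = 1/\sqrt{1+(\Psi'(s))^2}$, to eliminate $\chi'$. A direct manipulation gives
\begin{equation*}
\psi''\psi - (\psi')^2 \;=\; \frac{\Psi''\Psi - (\Psi')^2\bigl[1+(\Psi')^2\bigr]}{\bigl[1+(\Psi')^2\bigr]^2},
\end{equation*}
so that $(\psi'/\psi)'$ and the left-hand side of \eqref{stas} have the same sign at corresponding points. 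This establishes the equivalence of hypotheses.

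Once the equivalence is in place, the conclusion is immediate: by \cite[Theorem 4.1]{bandel} there exist a $C^1$-nonlinearity $f$ and a nonconstant stable stationary solution $Z = Z(r)$ of \eqref{rdn}. Setting $U_g(s) := Z(\chi^{-1}(s))$ yields a pattern of \eqref{rdn} with respect to the reparametrized metric \eqref{rmetric} and Laplace--Beltrami operator \eqref{lbk}, since stability, the Neumann condition, and the equation itself are geometric and hence invariant under the reparametrization.

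The main obstacle is bookkeeping in the change-of-variables computation — one must carry the factor $1+(\Psi')^2$ consistently and use the unit-speed relation to convert $\chi''$ into $\Psi$-quantities — but no analytic difficulty arises beyond what is already handled in \cite{bandel}. In practice I would first state the diffeomorphism step and the invariance of stationary patterns and stability under reparametrization, then carry out the sign-equivalence computation displayed above, and finally quote \cite[Theorem 4.1]{bandel} to conclude.
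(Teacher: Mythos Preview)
Your proposal is correct and is exactly the approach the paper takes: the paper does not give an independent proof of Theorem~\ref{main1} but presents it explicitly as the reparametrized form of \cite[Theorem~4.1]{bandel}, writing ``Under this reparametrization, stability criteria in \cite{bandel} can be rewritten as follows'' and then defining $U_g$ via $Z(r)=U_g(\chi(r))$. Your sign-equivalence computation $\psi''\psi-(\psi')^2=\bigl[\Psi''\Psi-(\Psi')^2(1+(\Psi')^2)\bigr]/\bigl[1+(\Psi')^2\bigr]^2$ is correct and makes explicit the one step the paper leaves implicit.
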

If we set $Z(r)=U_g(\chi(r))$, the function $f\in C^1(\mathbb{R})$ of problem \eqref{rdn} is given by
\begin{equation}\label{ff}
f[U_g(s)]= -\dfrac{\Psi^\prime U_g^\prime + \Psi U_g^{\prime\prime}}{\Psi [1+(\Psi^\prime)^2]} + \dfrac{\Psi^\prime\Psi^{\prime\prime}U_g^\prime}{[1+(\Psi^\prime)^2]^2}.
%f(U_g)= -\dfrac{(\Psi^\prime U_g + \Psi U_g^\prime)\chi^\prime}{\Psi}.
\end{equation}

\setcounter{equation}{0}
\setcounter{theorem}{0}
%%%%%%%%%%%%%%%%%%%%%%%%%%%%%
%%%%%%%%%%%%%%%%%%%%%%%%%%%%%
%%%%%%  3  Construction of closed surfaces   %%%%%
%%%%%%%%%%%%%%%%%%%%%%%%%%%%%
%%%%%%%%%%%%%%%%%%%%%%%%%%%%%
\section{Construction of closed surfaces having genus $1$}
Let us introduce a circular arc $C(\kappa)$ for sufficiently small $|\kappa|$.
\begin{equation}\label{ckappa}
C(\kappa) =  \left\lbrace (x_1, 0, x_3)= p(s):= \left( \frac{1}{\kappa} (1 - \cos \kappa s), 0, \frac{1}{\kappa} \sin \kappa s \right): 0\leq s \leq l \right\rbrace.
\end{equation}
The above parameterization shows that the $C(\kappa)$ is pliable by controlling the value of $\kappa$. If $\kappa=0$, we recognize that $C(\kappa)$ is congruent with the segment $[0,l]$ in the $x_3$-axis.
\begin{figure}[h]
	\centering
	\includegraphics[width=0.75\textwidth]{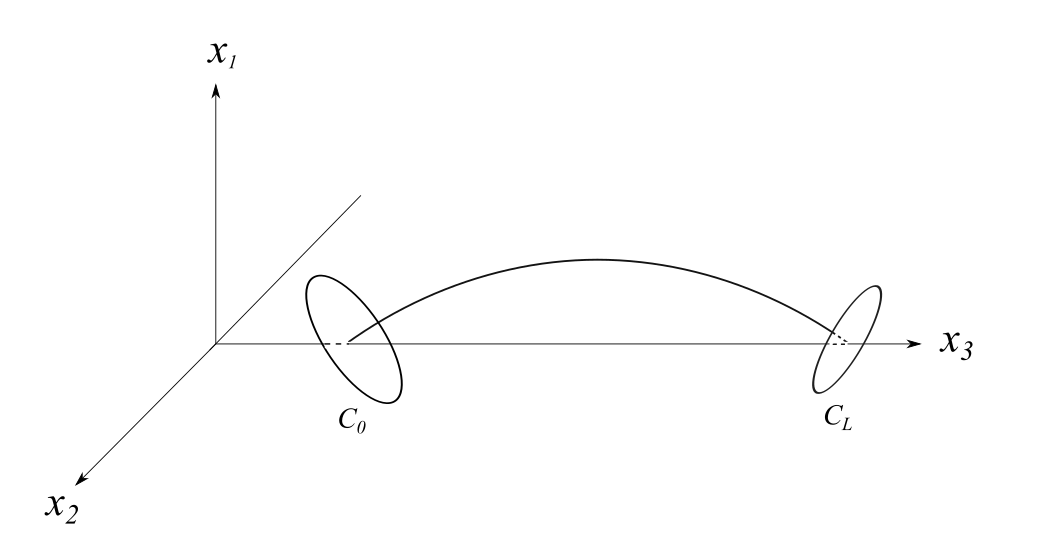}
	\caption{$C(\kappa)$ is bounded by two circles congruent with  $C_0$ and $C_L$.}
	\label{tree}
\end{figure}

For sufficiently small $|\kappa|$, let us introduce a Riemannian surface $M_{\kappa}$ as follows. While bending the line segment  $[0,l]$ into $C(\kappa)$, we preserve the value of $\Psi(s)$ by using Frenet trihedron of the curve $C(\kappa)$ (see \cite[p. 20]{do}). Let $(\mathbbm{t}(s), \mathbbm{n}(s), \mathbbm{b}(s))$ be the Frenet trihedron of $C(\kappa)$ where
\begin{equation}
\mathbbm{t}(s)=\begin{pmatrix} \sin \kappa s\\ 0\\\cos \kappa s \end{pmatrix}, \quad
\mathbbm{n}(s)=\begin{pmatrix}\cos \kappa s\\ 0\\  -\sin \kappa s \end{pmatrix}, \quad
\mathbbm{b}(s)=\begin{pmatrix} 0\\ 1\\  0 \end{pmatrix}.
\end{equation}
Then, we define a parametrization $x=x(s,\theta) \in M_\kappa$ by 
\begin{equation*}
x(s,\theta)= p(s)+ \Psi (s)\cos \theta \cdot \mathbbm{n}(s) + \Psi(s) \sin \theta \cdot \mathbbm{b}(s)\quad  \mbox{ for }\ 
 (s, \theta) \in [0,l] \times S^1,
\end{equation*} 
where $p(s)$ is given by \eqref{ckappa}.  Hence $M_{\kappa}$ is parameterized by
\begin{equation}\label{mgk}
\begin{cases}
x_1=\dfrac{1}{\kappa} (1 - \cos \kappa s)+ \Psi(s) \cos \theta \cos \kappa s, \\[0.25cm] 
x_2=\Psi(s) \sin \theta,    \hspace{5cm} \left( (s, \theta) \in [0,l] \times S^1\right) \\[0.25cm] 
x_3=\dfrac{1}{\kappa} \sin \kappa s - \Psi(s) \cos \theta \sin \kappa s.
\end{cases}
\end{equation}
\begin{figure}[h]
	\centering
	\includegraphics[width=0.35\textwidth]{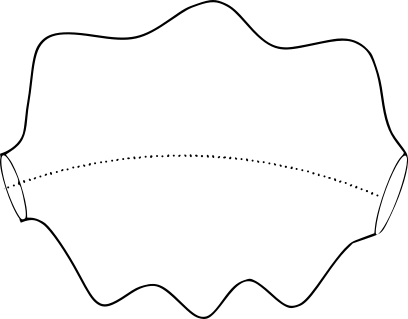}
	\caption{A modification of a surface of revolution $M_{\kappa}$ with $\kappa \neq 0$.}
	\label{Mgk}
\end{figure}
Thus, there exists $\delta_0 >0$ such that,  if $|\kappa| \leq \delta_0$, then $M_{\kappa}$ is a properly embedded Riemannian surface in $\mathbb{R}^3$  parameterized in local coordinates: $x^1=s$ and $x^2=\theta$. Let us call $C(\kappa)$ the {\it center curve} of $M_{\kappa}$.
 %The area element on $M_{\kappa}$ is given by $dV_{g^\kappa}=\sqrt{|g^\kappa|} dr d\theta$. 
 The corresponding Riemannian metric $g^\kappa=(g^\kappa_{ij})$ for $M_{\kappa}$ is given by
\begin{equation*}
({g^\kappa}_{ij})_{i,j=1,2}= 
\begin{pmatrix}
(\Psi^\prime(s))^2+(\kappa \Psi(s)\cos \theta-1)^2& 0\\[0.25cm]
0& \Psi^2(s)
\end{pmatrix},  %(\Psi^\prime(s))^2+(\kappa \Psi(s)\cos \theta-1)^2] ds + \Psi^2(s) d\theta.
\end{equation*}
the area element on $M_{\kappa}$ is  $dV_{g^\kappa}=\sqrt{|g^\kappa|} dr d\theta$, 
and the Riemannian gradient $\nabla_{g^\kappa} u$ of $u$ with respect to $g^\kappa$ is 
\begin{equation*}
\nabla_{g^\kappa} u = \begin{pmatrix} \dfrac{1}{(\Psi^\prime(s))^2+(\kappa \Psi(s)\cos \theta-1)^2} \partial_s u \\[0.5cm] 
 \dfrac{1}{\Psi^2(s)} \partial_\theta u
\end{pmatrix}.
\end{equation*}
The Laplace-Beltrami operator $\Delta_{g^\kappa}$ on $M_\kappa$ is given by
\begin{equation}\label{Laplacian perturbed}
\Delta_{g^\kappa} u= \frac 1{\Phi^2}u_{ss} + \frac 1{\Psi^2}u_{\theta\theta} + \frac{\Psi^\prime\Phi-\Phi_s\Psi}{\Phi^3\Psi}u_s+\frac{\Phi_\theta}{\Phi\Psi^2}u_\theta,
\end{equation}
where we put $\Phi = \Phi(s,\theta) = \sqrt{(\Psi^\prime(s))^2+(\kappa \Psi(s)\cos \theta-1)^2}.$
Note that $g^0=g$.  We observe  from \eqref{as1}, \eqref{as2} and \eqref{mgk} that if $|\kappa| < \delta_0$, then
 \begin{enumerate}[label=(\arabic*)]
	\item\label{i1} The boundary of $M_{\kappa}$  consists of two circles congruent with $C_0$ and $C_L$,
	\item \label{i2} $\Psi(0)>0$, $\Psi(l)>0$,
	\item\label{i3}$\dfrac{d^i\Psi}{ds^i}(s)=0, i=1,3 \mbox{ at } s \in\left\lbrace 0, l\right\rbrace $.
\end{enumerate}
Moreover, by choosing $\delta_0$ smaller if necessary, we are allowed to form a closed surface by attaching a finite number of copies of $M_{\kappa}$, as follows: 
\begin{enumerate}[label=Step \arabic*.]
	\item \label{s0} Choose a component congruent with $C_0$ of the boundary $\partial M_{\kappa}$.
	\item Take two copies $M^1$ and $M^2$ of $M_{\kappa}$.
	\item \label{s3} Attach  the component of the boundary, which is congruent with $C_0$,  of $M^1$ to that of $M^2$ in such way that a new surface 
	$
	M_* = M^1 \cup  M^2 %_{C^*}  
$ is symmetric with respect to the hyperplane containing the circle congruent with $C_0$. The boundary of $M_*$ consists of two components both of which are congruent with $C_L$ and the center curve of $M_*$ is just a circular arc. 
	$C^3$ smoothness of  $M_*$ is guaranteed by \ref{i3}.
	\begin{figure}[h!]
		\centering
		\includegraphics[width=0.5\textwidth]{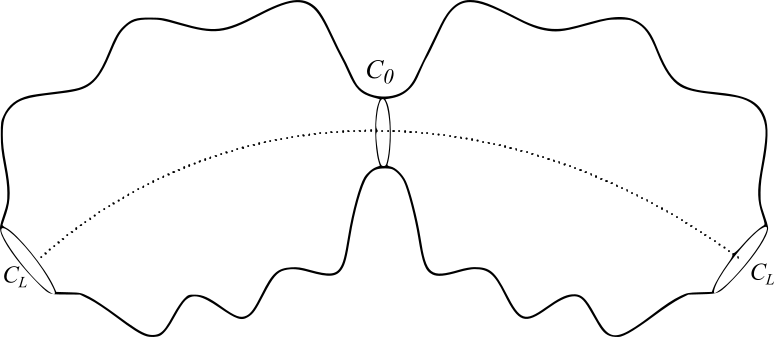}
\caption{A new surface $M_*$ composed of two $M_{\kappa}$.} 
	\end{figure}
	\item \label{s4} Take two copies $M^{1}_*$ and $M^{2}_*$ of $M_*$ and attach one of the boundary components  of $M^{1}_*$ to that of $M^{2}_*$ in such way that the center curve of a new surface $M_{2*}= M_*^1 \cup M_*^2$ is just a circular arc.
	$C^3$ smoothness of  $M_{2*}$ is guaranteed by \ref{i3}.
	\item  Take a copy $M^3_*$ of $M_*$ and attach one of the boundary components  of $M^3_*$ to that of $M_{2*}$ to form a new surface $M_{3*} = M^3_* \cup  M_{2*}$ in such way that the center curve of  $M_{3*} $ is just a circular arc.
	Also,  $C^3$ smoothness of  $M_{3*}$ is guaranteed by \ref{i3}. Repeat this step until $i$-th iteration and update  $M_{2*}$ by $M_{(i-1)*}$ for each iteration. It will form a new $C^3$ surface $M_{i*}$  which eventually consists of $i$ copies of
	$M_*$. By choosing an appropriate sufficiently small $|\kappa| \in (0, \delta_0)$, after $n$-th iteration, we will have a closed $C^3$ surface $M$ of genus $1$,  composed of $n$ copies of $M_*$,  and the center curve of $M$ is just a whole circle. We notice that the resulting surface $M$ is symmetric with respect to each hyperplane containing a component of the boundary of each $M_\kappa$.
	 %which consists of two circles congruent with $C_0$ or $C_L$.
\end{enumerate}

\setcounter{equation}{0}
\setcounter{theorem}{0}
%%%%%%%%%%%%%%%%%%%%%%%%%%%%%
%%%%%%%%%%%%%%%%%%%%%%%%%%%%%
%%%%%%   4   Stability   %%%%%%%%%%%%%%%%
%%%%%%%%%%%%%%%%%%%%%%%%%%%%%
%%%%%%%%%%%%%%%%%%%%%%%%%%%%%
\section{Stability}
In this section, our goal is to present the full proof of Theorem \ref{mt}. Consider the eigenvalue problem for a linearized problem at a stationary solution $U_g$ of \eqref{rdn},
\begin{equation}
\label{egval}
\begin{cases}
 \Delta_{g} q+f^\prime(U_g) q=-\lambda q &\mbox{ in }\ D,\\ 
\frac {\partial q}{\partial\nu}=0 &\mbox{ on }\  \partial D.
\end{cases}
\end{equation}
The principal eigenvalue is characterized by Rayleigh quotient 
\begin{equation} \label{ev}
\lambda_1=  \inf_{\substack{q \neq 0\\q \in H^1(D)}}  \dfrac{ \int\limits_D \left( |\nabla_{g} q|^2 - f^\prime (U_g)q^2 \right) dV_g}{\int\limits _D q^2 dV_g}.
\end{equation}
Let $\phi$ be the eigenfunction of \eqref{rdn} that corresponds to the principal eigenvalue $\lambda_1$ satisfying
\begin{equation}
\label{egval*}
\begin{cases}
 \Delta_{g} \phi+f^\prime(U_g) \phi=-\lambda_1 \phi  &\mbox{ in  } D,\\
 \Vert\phi\Vert_{L^2(D)}=1  ,\quad \phi> 0 & \mbox{ in  } D, \mbox{ and } \frac {\partial \phi}{\partial\nu}=0  \mbox{ on }  \partial D.
 \end{cases}
\end{equation}

It is well known that the principal eigenvalue determines the stability of a stationary solution. The negative principal eigenvalue will lead to the instability of stationary solution $U_g$ while the positive one will assure the stability of $U_g$ (see \cite{eigen} for instance). If $\lambda_1=0$, then the stability of $U_g$ is undetermined.
\subsection{Existence of Patterns }\label{ss}%on a Surface Revolution with Domain Perturbation}

\begin{theorem}\label{main}
	Let $U_g$  be the patterns of \eqref{rdn} on $D$ given by Theorem \ref{main1}. Then, there exist $\kappa_0 \in (0, \delta_0)$ and a family of patterns $U_\kappa$ of \eqref{rdn}  on $M_{\kappa}$ for all $ | \kappa| \in (0, \kappa_0)$.
	%each of which has no critical points in $M_{\kappa}$.
\end{theorem}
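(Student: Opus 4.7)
The plan is to regard Theorem \ref{main} as a perturbation statement and apply the implicit function theorem, with $\kappa$ as the scalar parameter and the stationary solution as the unknown. Because all the surfaces $M_{\kappa}$ share the common parameter domain $I = [0,l]\times S^1$, the problem can be reformulated on $I$ with a metric depending smoothly on $\kappa$, which is precisely what \eqref{Laplacian perturbed} provides.

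First, I would fix a pair of Hölder spaces adapted to the Neumann condition, say
$$X = \{u \in C^{2,\alpha}(I) : \partial_s u = 0 \text{ on } \{0,l\}\times S^1\}, \quad Y = C^{\alpha}(I),$$
and define $F : X \times (-\delta_0,\delta_0) \to Y$ by $F(u,\kappa) = \Delta_{g^\kappa} u + f(u)$, where $f$ is the nonlinearity furnished by Theorem \ref{main1}. Since the coefficients of $\Delta_{g^\kappa}$ in \eqref{Laplacian perturbed} depend smoothly on $\kappa$ and $f\in C^1$, the map $F$ is of class $C^1$, and $F(U_g,0)=0$ by construction.

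The next step is the crucial one: I would show that $D_u F(U_g,0)v = \Delta_g v + f'(U_g) v$ is an isomorphism from $X$ onto $Y$. Because the operator $-\Delta_g - f'(U_g)$ with Neumann boundary conditions is self-adjoint with compact resolvent, this reduces to proving that $0$ is not in its spectrum, i.e.\ that the principal eigenvalue $\lambda_1$ of \eqref{egval}--\eqref{egval*} is strictly positive. Lyapunov stability of $U_g$ only gives $\lambda_1 \ge 0$ a priori, so I expect the main obstacle to lie here: one must exploit the explicit structure of the pattern and the nonlinearity constructed in \cite{bandel} (for instance that $U_g$ depends only on $s$ and is strictly monotone, together with the explicit ODE defining $f$) to rule out the critical case $\lambda_1 = 0$. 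Once this is in hand, the Fredholm alternative and standard elliptic regularity yield the required bounded inverse.

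With invertibility established, the implicit function theorem produces $\kappa_0 \in (0,\delta_0)$ and a $C^1$ branch $\kappa \mapsto U_\kappa \in X$ with $U_0 = U_g$ and $F(U_\kappa,\kappa)=0$ for $|\kappa| < \kappa_0$. Since $U_g$ is nonconstant and $U_\kappa \to U_g$ in $C^0(I)$ as $\kappa \to 0$, shrinking $\kappa_0$ if necessary makes every $U_\kappa$ nonconstant. Finally, stability is inherited by perturbation: the principal Neumann eigenvalue $\lambda_1(\kappa)$ of $-\Delta_{g^\kappa} - f'(U_\kappa)$ depends continuously on $\kappa$ through the coefficients, and $\lambda_1(0) > 0$ by the previous step, so $\lambda_1(\kappa) > 0$ for $|\kappa| < \kappa_0$ after a further reduction of $\kappa_0$. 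The standard principle of linearized stability then upgrades this to Lyapunov stability of $U_\kappa$ as a stationary solution of \eqref{rdn} on $M_{\kappa}$, proving that each $U_\kappa$ is a pattern.
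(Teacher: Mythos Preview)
Your proposal is correct and follows essentially the same route as the paper: implicit function theorem in the H\"older space $X$ for existence (the paper's Lemma~\ref{exist}), then continuity of the principal eigenvalue $\lambda_1^\kappa \to \lambda_1$ for stability (the paper's Lemma~\ref{uni}, carried out there by explicit Rayleigh-quotient comparisons rather than abstract perturbation theory). One clarification on what you flagged as the ``main obstacle'': the strict positivity $\lambda_1>0$ is not something that has to be re-derived here---it is part of what the construction in \cite{bandel} delivers (their stability argument establishes $\lambda_1>0$ directly, not merely Lyapunov stability), and the paper simply invokes it when showing $\partial_\upsilon F(0,0)$ is invertible.
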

Let us start with a lemma concerning the existence of a stationary solution of \eqref{rdn} on $M_{\kappa}$ for sufficiently small $|\kappa| > 0$. Functions on $D$ or on $M_\kappa$ are considered as those on $I=[0,l]\times S^1$ given in \eqref{surev}. Conversely, functions on $I$ are considered as those on $D$ or $M_\kappa$. Therefore, we may deal with functions on $I$ instead of those on $D$ or on $M_\kappa$.
\begin{lemma}\label{exist}
	Let $0< \alpha <1$. There exists $\delta_1\in (0,\delta_0)$ such that for every $|\kappa| \in (0,\delta_1)$ a stationary solution $U_\kappa$ of \eqref{rdn} on $M_\kappa$ and $\epsilon_1(\kappa)$ with $\lim\limits_{\kappa \rightarrow 0} \epsilon_1(\kappa)=0$ exist and satisfy
	\begin{equation*}
	\Vert U_\kappa - U_g\Vert_{\mathcal{C}^{2,\alpha}(I)} < \epsilon_1(\kappa)\, \mbox{  if  }\, |\kappa| \in(0, \delta_1).
	\end{equation*}
\end{lemma}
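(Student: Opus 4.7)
The plan is to apply the implicit function theorem with $\kappa$ as the parameter, treating the whole family of equations $\Delta_{g^\kappa} u + f(u) = 0$ with Neumann boundary condition as a perturbation of the $\kappa = 0$ problem, for which $U_g$ is a known solution by Theorem \ref{main1}. A key observation that makes the setup clean is that under the parametrization \eqref{mgk}, the outward unit normal to $\partial M_\kappa$ at $s = 0, l$ always points in the $\partial_s$ direction (tangent to the center curve at those endpoints), so the Neumann condition $\partial u/\partial \nu = 0$ is equivalent to $u_s|_{s=0}=u_s|_{s=l} = 0$ uniformly in $\kappa$. This allows me to use a single pair of Banach spaces
$$X = \{u \in C^{2,\alpha}(I) : u_s|_{s=0} = u_s|_{s=l} = 0\}, \qquad Y = C^{0,\alpha}(I),$$
and to define $F : (-\delta_0, \delta_0) \times X \to Y$ by $F(\kappa, u) = \Delta_{g^\kappa} u + f(u)$, with $\Delta_{g^\kappa}$ given by \eqref{Laplacian perturbed}.

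Next I would verify the three hypotheses of the IFT. First, $F(0, U_g) = 0$ holds because $g^0 = g$ and $U_g$ solves \eqref{rdn} by Theorem \ref{main1}. Second, since the coefficients of $\Delta_{g^\kappa}$ in \eqref{Laplacian perturbed} are smooth functions of $\kappa$ and of $(s,\theta)$ on a compact set (with $\Phi$ bounded away from $0$ for $|\kappa|$ small, and $\Psi$ bounded away from $0$ on $[0,l]$ by \ref{i2}), and since $f \in C^1(\mathbb{R})$ defines a $C^1$ Nemytskii operator on Hölder spaces, $F$ is of class $C^1$. Third, the partial differential
$$D_u F(0, U_g)\, v = \Delta_g v + f'(U_g)\, v : X \to Y$$
is a self-adjoint elliptic operator with Neumann boundary condition, so by the Fredholm alternative and standard Schauder estimates it is an isomorphism if and only if $0$ is not an eigenvalue of \eqref{egval}. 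This is exactly the condition that the principal eigenvalue $\lambda_1$ in \eqref{ev} is strictly positive. Once these hypotheses are checked, the IFT gives $\delta_1 \in (0, \delta_0)$ and a $C^1$ curve $\kappa \mapsto U_\kappa \in X$ with $U_0 = U_g$ and $F(\kappa, U_\kappa) = 0$, and continuity immediately provides the error bound $\|U_\kappa - U_g\|_{C^{2,\alpha}(I)} = \epsilon_1(\kappa) \to 0$ as $\kappa \to 0$.

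The main obstacle is the invertibility step, because the stability statement used to build $U_g$ only guarantees $\lambda_1 \geq 0$, and the IFT requires strict positivity $\lambda_1 > 0$. To handle this, I would go back to the ODE construction of \cite{bandel}: the nonlinearity $f$ is designed through the Yanagida-type ODE so that $U_g = U_g(s)$ is not merely Lyapunov stable but linearly stable with a spectral gap, and the eigenfunctions separate via Fourier series in $\theta$, so the lowest eigenvalue is attained on $\theta$-independent modes, where strict positivity follows from the explicit stability analysis \cite[Lemma 4.2, p.~41]{bandel}. A secondary but minor technical point is to ensure that the Schauder theory gives an isomorphism onto all of $Y$ rather than a codimension-one subspace, which is automatic once the kernel is trivial because the operator is self-adjoint.
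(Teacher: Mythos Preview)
Your proof is correct and follows essentially the same route as the paper: define $F(\kappa,\cdot)=\Delta_{g^\kappa}(\cdot)+f(\cdot)$ between the Banach spaces $X=\{u\in C^{2,\alpha}(I):u_s|_{s=0,l}=0\}$ and $Y=C^{0,\alpha}(I)$, check $F(0,U_g)=0$, compute $D_uF(0,U_g)=\Delta_g+f'(U_g)$, and invoke $\lambda_1>0$ together with Schauder theory to obtain invertibility and apply the implicit function theorem. The only cosmetic difference is that the paper centers the unknown at $v=u-U_g$ so that the base point is $(0,0)$; your discussion of why $\lambda_1>0$ (rather than merely $\lambda_1\ge 0$) via \cite[Lemma~4.2]{bandel} is more explicit than the paper, which simply asserts it.
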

\begin{proof}
	Let $X$ be a closed linear subspace of $C^{2,\alpha}(I)$ with $I =[ 0, l] \times \mathcal{S}^1$ given by
	\begin{equation*}
	X=\{ \upsilon \in C^{2,\alpha}(I)  \, : \, \frac{\partial\upsilon}{\partial s}=0 \mbox{ at } s=0,s=l \}.
	\end{equation*}
	Define for  $(\kappa , \upsilon)$ in $ [-\delta_0, \delta_0]\times X$ the mapping $F : [-\delta_0, \delta_0]\times X \rightarrow C^\alpha(I)$ by
	\begin{equation*}
	F(\kappa,\upsilon)= 
	\Delta_{g^\kappa}(U_g+\upsilon)+f(U_g+ \upsilon).
	\end{equation*}
	 We notice that $F(0,0)=0$ and $F$ is of class $C^1$ in a neighbourhood of $(0,0)$. 
	
The partial Fr\'echet derivative of mapping $F(\kappa, \upsilon)$ with respect to $\upsilon$ at $(0,0)$ is given by
	\begin{equation*}
	\dfrac{\partial F}{\partial \upsilon}(0, 0)q=\Delta_g q+f^\prime(U_g)q \quad \textrm{for } q \in X.
	\end{equation*}
	Let us show that operator $\dfrac{\partial F}{\partial \upsilon}(0, 0)$ is invertible. Indeed, for each $h \in C^\alpha(I)$, we consider the following boundary value problem for $q$ :
	\begin{equation}
	\label{eqh}
	\begin{cases}
	 \Delta_{g} q+f^\prime(U_g) q=h  &\mbox{ in }D,
	\\
	\dfrac{\partial q}{\partial \nu}=0  &\mbox{ on }\partial D. 
	\end{cases}
	\end{equation}
Since $\lambda_1 >0$, for every $h \in C^\alpha(I)$, by the standard theory of elliptic partial differential equations of second order, we see that there exist a unique solution $q\in X$ of \eqref{eqh} and a constant $C>0$ independent of $q$ and $h$ satisfying 
\begin{equation*}
	\left \Vert\dfrac{\partial F}{\partial \upsilon}^{-1}(0, 0)h\right\Vert_{C^{2,\alpha}(I)} =\Vert q\Vert_{C^{2,\alpha}(I)} \leq C 	\Vert h\Vert_{C^{\alpha}(I)}.
\end{equation*}
	Thus by the implicit function theorem (see \cite[Theorema 2.7.2, p. 34]{nirenberg} or \cite[Theorema 15.1, p. 148]{Deimling}), there exist a number $\delta_1\in (0,\delta_0)$ with $ \mathcal{N} =(-\delta_1,\delta_1)$ and a unique  continuous map  $\rho: \mathcal{N} \rightarrow X $ such that $\rho(0)=0$ and for every $\kappa \in \mathcal{N}$
	\begin{equation*}
	F(\kappa, \rho(\kappa))=\Delta_{g^\kappa} U_\kappa+f(U_\kappa)=0,\\
	\end{equation*}
	where we set $U_\kappa=U_g +\rho(\kappa)$. Hence, the result follows. %has a solution $U_\kappa$ for each $\kappa \in \mathcal{N}$ with $\upsilon=\upsilon(\kappa) \in X$.
\end{proof}

Next, we will determine the stability of the stationary solution $U_\kappa$ of \eqref{rdn} on $M_\kappa$ by using the principal eigenvalue. 	Let   $\lambda_1^\kappa$  be the principal eigenvalue  with the corresponding eigenfunction $\phi^\kappa \in H^1(I)$ of  \eqref{rdn} on $M_{\kappa}$ which satisfies
\begin{equation}\label{prin}
\begin{cases}
\Delta_{g^\kappa}  \phi^\kappa + f^\prime (U_\kappa) \phi^\kappa = -\lambda_1^\kappa \phi^\kappa &\mbox{ in  }M_\kappa,\\ 
\Vert\phi^\kappa\Vert_{L^2(M_{\kappa})}=1  ,\  \phi^\kappa > 0 &\mbox{ in  }M_\kappa,  \mbox{ and } \frac {\partial \phi^\kappa}{\partial\nu}=0  \mbox{ on }  \partial M_\kappa, 
\end{cases}
\end{equation}
where
\begin{equation} \label{ev1}
\lambda_1^\kappa=  \inf_{\substack{q \neq 0\\q \in H^1(M_\kappa)}}  \dfrac{ \int\limits_{M_\kappa}\left( |\nabla_{g^\kappa} q|^2 - f^\prime (U_{g^\kappa})q^2 \right) dV_{g^\kappa}}{\int\limits _{M_\kappa} q^2 dV_{g^\kappa}}.
\end{equation} 

The existence of eigenfunction  $\phi^\kappa$ of \eqref{prin} that corresponds  to $\lambda_1^\kappa$ can be confirmed in the  way similar to that  in  \cite[Chapter 8.12]{buku}. 
\begin{lemma}\label{int}
	For every $| \kappa|  \in (0,\delta_1)$, there exists $\epsilon_2(\kappa)$ with $\lim\limits_{\kappa \rightarrow 0}\epsilon_2(\kappa)=0$ such that
	\begin{enumerate}[label=(\roman*),font=\upshape]
		\item 
		$\| f^\prime (U_\kappa)- f^\prime (U_g)\|_{\infty} \leq \epsilon_2 (\kappa)$.
		\item $(1 -\epsilon_2(\kappa))dV_{g}  \leq  dV_{g^\kappa}\leq  (1 +\epsilon_2(\kappa))dV_{g}$.
%\item For every $q \in H^1(I)$, 
%	\begin{equation*}
%	\left( 1-\epsilon_2(\kappa)\right) \int\limits_D q^2 dV_g \leq \int\limits_{M_{\kappa}} q^2 dV_{g^\kappa} \leq\left( 1+\epsilon_2(\kappa)\right) \int\limits_D q^2 dV_g.
%	\end{equation*}   
	 	\item $(1- \epsilon_2(\kappa) ) |\nabla_{g}\phi^\kappa|^2 \leq |\nabla_{g^\kappa}\phi^\kappa|^2 \leq (1+ \epsilon_2(\kappa) ) |\nabla_{g}\phi^\kappa|^2 $.
\end{enumerate}
\end{lemma}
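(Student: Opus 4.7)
The plan is to establish the three estimates separately and then take $\epsilon_2(\kappa)$ to be a common majorant of the three individual error terms, each of which will tend to $0$ as $\kappa\to 0$.

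For (i), I would invoke Lemma~\ref{exist}: since $\|U_\kappa-U_g\|_{C^{2,\alpha}(I)}<\epsilon_1(\kappa)$, in particular $\|U_\kappa-U_g\|_\infty\to 0$. The images of $U_g$ and of all $U_\kappa$ with $|\kappa|<\delta_1$ are contained in some fixed compact interval $K\subset\mathbb{R}$, and since $f\in C^1(\mathbb{R})$ the derivative $f'$ is uniformly continuous on $K$. Then $\|f'(U_\kappa)-f'(U_g)\|_\infty$ is bounded by the modulus of continuity of $f'|_K$ evaluated at $\epsilon_1(\kappa)$, which tends to $0$ with $\kappa$.

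For (ii), I would read off the area elements directly from \eqref{rmetric} and from the formula for $g^\kappa$ displayed just before \eqref{Laplacian perturbed}, obtaining $dV_g=\Psi\sqrt{1+(\Psi')^2}\,ds\,d\theta$ and $dV_{g^\kappa}=\Psi\,\Phi\,ds\,d\theta$ with $\Phi=\sqrt{(\Psi')^2+(\kappa\Psi\cos\theta-1)^2}$. Because $\Psi$ is bounded on the compact $[0,l]$, one has $(\kappa\Psi\cos\theta-1)^2=1-2\kappa\Psi\cos\theta+\kappa^2\Psi^2\cos^2\theta$, so the ratio $\Phi/\sqrt{1+(\Psi')^2}$ converges to $1$ uniformly on $I$ at rate $O(\kappa)$; this is exactly the claimed two-sided bound.

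For (iii), I would compare the gradient norms computed in \eqref{rgrad} and its $g^\kappa$-analogue stated just before \eqref{Laplacian perturbed}. The $\theta$-components $(\partial_\theta\phi^\kappa)^2/\Psi^2$ agree on the nose, while the $s$-components differ only through the replacement of $1+(\Psi')^2$ by $(\Psi')^2+(\kappa\Psi\cos\theta-1)^2$ in the denominator. The same uniform $O(\kappa)$ estimate used in (ii) shows that these two denominators have ratio $1+O(\kappa)$, so the coefficients of $(\partial_s\phi^\kappa)^2$ differ multiplicatively by $1+O(\kappa)$. Since the $\theta$-coefficients match and the quadratic form is a sum of two nonnegative terms, the same pointwise bound transfers to the full expressions $|\nabla_{g^\kappa}\phi^\kappa|^2$ and $|\nabla_g\phi^\kappa|^2$, independently of any bound on $\phi^\kappa$ itself. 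I do not foresee a genuine obstacle: all three items reduce to the continuous dependence of the metric coefficients on $\kappa$ at $\kappa=0$, combined, in the case of (i), with the $C^{2,\alpha}$-convergence of $U_\kappa$ supplied by Lemma~\ref{exist}.
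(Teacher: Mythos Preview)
Your proposal is correct and follows essentially the same line as the paper: part~(i) via Lemma~\ref{exist} and continuity of $f'$, parts~(ii) and~(iii) via the explicit $O(\kappa)$ comparison of the metric coefficients. Your treatment of~(iii) is in fact slightly cleaner than the paper's, which bounds $(\partial_s\phi^\kappa)^2\le C|\nabla_g\phi^\kappa|^2$ before concluding, whereas you observe directly that the $\theta$-terms coincide and a multiplicative bound on one summand of a sum of two nonnegative terms propagates to the whole sum.
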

\begin{proof}
			Lemma \ref{exist} and the continuity of $f^\prime$ yield  assertion (i), for some $\epsilon_2(\kappa)>0$ with $\lim\limits_{\kappa \rightarrow 0}\epsilon_2(\kappa)=0$.
			Next, represent $dV_{g^\kappa} $ by using Taylor expansion with respect to $\kappa$ at $\kappa=0$. Since $\dfrac{\partial \left(\sqrt{|g^\kappa|}\right)}{\partial \kappa}$ is continuous on $I \times [-\delta_1,\delta_1]$, there exists a constant $m_1 > 0$ satisfying $\left |\dfrac{\partial \left(\sqrt{|g^\kappa|}\right)}{\partial \kappa}\right| \leq m_1$. Then, we obtain
			\begin{equation*}\label{exp1}
			%\left( 1- \delta_2(\kappa)\right) dV_{g} \leq dV_{g^\kappa}= \left(\sqrt{|g|} +  \mathcal{O}(\kappa)\right)ds d\theta \leq\left( 1+ \delta_2(\kappa)\right) dV_{g}.
			(1 -\epsilon_2(\kappa))dV_{g}  \leq \left(\sqrt{|g|} -  m_1|\kappa|\right)ds d\theta \leq dV_{g^\kappa}\leq \left(\sqrt{|g|} +  m_1|\kappa|\right)ds d\theta \leq  (1 +\epsilon_2(\kappa))dV_{g},
			\end{equation*}
			if we choose  $\epsilon_2(\kappa)\geq \dfrac{m_1 |\kappa|}{\min_D \sqrt{|g|}}$. Thus, this chain of inequalities gives assertion (ii). 
			
It remains to show assertion (iii). We have that
\begin{equation*}
|\nabla_{g^\kappa} \phi^\kappa|^2 = \dfrac{1}{(\Psi^\prime(s))^2+(\kappa \Psi(s)\cos \theta-1)^2} (\partial_s \phi^\kappa)^2+\dfrac{1}{\Psi^2}(\partial_\theta \phi^\kappa)^2.
\end{equation*}
For $(g^\kappa_{11})^{-1}= \dfrac{1}{(\Psi^\prime(s))^2+(\kappa \Psi(s)\cos \theta-1)^2}$, $\dfrac{\partial(g^\kappa_{11})^{-1}}{\partial \kappa}$  is continuous on  $I \times [-\delta_1,\delta_1]$. Hence, there exists a constant $m_2 > 0$  satisfying  $\left |\dfrac{\partial(g^\kappa_{11})^{-1}}{\partial \kappa}\right| \leq m_2$. Also, there exists a constant $C>0$ such that $(\partial_s \phi^\kappa)^2 \leq C |\nabla_{g} \phi^\kappa|^2$. Then, we get
\begin{equation*}
\Big| |\nabla_{g^\kappa} \phi^\kappa|^2- |\nabla_{g} \phi^\kappa|^2 \Big| \leq m_2|\kappa|  |\partial_s \phi^\kappa|^2 \leq m_2|\kappa|C|\nabla_{g} \phi^\kappa|^2.
\end{equation*}
This shows (iii), if we choose $\epsilon_2(\kappa)\geq \max \Bigg \lbrace \dfrac{m_1 |\kappa|}{\min_D \sqrt{|g|}}, m_2|\kappa|C\Bigg \rbrace$.
\end{proof}

\begin{lemma}\label{bounded}
	 There exists a constant $C^* >0$ such that  if $ |\kappa| <  \delta_1$ then
	 $$
	 \int\limits_{M_\kappa}|\nabla_{g^\kappa} \phi^\kappa|^2 dV_{g^\kappa} \leq C^*.
	 $$
\end{lemma}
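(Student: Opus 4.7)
The plan is to multiply the eigenvalue equation (\ref{prin}) by $\phi^\kappa$ and integrate by parts, then bound the resulting terms using the normalization of $\phi^\kappa$, the uniform bound on $f'(U_\kappa)$ coming from Lemma \ref{int}(i), and an upper bound on $\lambda_1^\kappa$ obtained from the Rayleigh quotient with a constant test function.

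More concretely, testing the first equation of (\ref{prin}) against $\phi^\kappa$ and using the Neumann condition together with the normalization $\|\phi^\kappa\|_{L^2(M_\kappa)} = 1$, the divergence theorem yields
\begin{equation*}
\int_{M_\kappa} |\nabla_{g^\kappa} \phi^\kappa|^2 \, dV_{g^\kappa} = \lambda_1^\kappa + \int_{M_\kappa} f'(U_\kappa) (\phi^\kappa)^2 \, dV_{g^\kappa}.
\end{equation*}
By Lemma \ref{int}(i), there is a constant $K$ independent of $\kappa$ with $\|f'(U_\kappa)\|_\infty \le K$ for all $|\kappa| < \delta_1$. Combined with $\int_{M_\kappa}(\phi^\kappa)^2\, dV_{g^\kappa} = 1$, the integral term is bounded in absolute value by $K$, so it remains to bound $\lambda_1^\kappa$.

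Plugging the constant test function $q \equiv 1$ into the Rayleigh quotient (\ref{ev1}) gives the upper bound
\begin{equation*}
\lambda_1^\kappa \le \frac{-\int_{M_\kappa} f'(U_\kappa) \, dV_{g^\kappa}}{\int_{M_\kappa} dV_{g^\kappa}} \le \|f'(U_\kappa)\|_\infty \le K,
\end{equation*}
and since $\lambda_1^\kappa$ itself is a Rayleigh quotient bounded below by $-\|f'(U_\kappa)\|_\infty \ge -K$, we have $|\lambda_1^\kappa| \le K$ uniformly in $\kappa$. Combining the two estimates yields the claim with $C^* := 2K$.

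There is no real obstacle here; this is essentially the standard $H^1$-bound for a principal eigenfunction, and all the key ingredients (uniform $L^\infty$-control of $f'(U_\kappa)$ and the normalization of $\phi^\kappa$) have already been set up. The only mild subtlety is that the $L^2$-normalization in (\ref{prin}) and the volume form in the Rayleigh quotient are both with respect to $dV_{g^\kappa}$, so that Lemma \ref{int}(ii) is not strictly needed for this particular bound, though one should make sure that the integration by parts and the test-function argument are consistent in using the perturbed metric throughout.
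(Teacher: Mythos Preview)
Your proof is correct and follows the same strategy as the paper's: integrate the eigenvalue equation against $\phi^\kappa$, bound $\lambda_1^\kappa$ from above via the constant test function in the Rayleigh quotient, and control the $f'(U_\kappa)$ term using the normalization. Your version is in fact slightly more streamlined, since you bound $\|f'(U_\kappa)\|_\infty$ directly by a single constant $K$ rather than splitting $f'(U_\kappa)=\bigl(f'(U_\kappa)-f'(U_g)\bigr)+f'(U_g)$ and passing through Lemma~\ref{int}(ii); as you correctly observe, the comparison of volume forms is not needed for this particular estimate (and the lower bound on $\lambda_1^\kappa$ you mention is harmless but also unnecessary).
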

\begin{proof}
	We have from \eqref{prin} that
	\begin{align}\label{apa}
	\int\limits_{M_{\kappa}}  |\nabla_{g^\kappa} \phi^\kappa|^2  dV_{g^\kappa}=\lambda_1^\kappa + \int\limits_{M_{\kappa}} f^\prime (U_\kappa) (\phi^\kappa)^2 dV_{g^\kappa}.
	\end{align} Since $\lambda_1^\kappa$ is the principal eigenvalue (see \eqref{ev1}), for every $q \in H^1(I)$
	\begin{align*}
	\lambda_1^\kappa\leq \dfrac{\int\limits_{M_{\kappa}} \left( |\nabla_{g^\kappa} q|^2  - f^\prime (U_\kappa) q^2\right)  dV_{g^\kappa} }{ \int\limits_{M_{\kappa}} q^2 dV_{g^\kappa}}.
	\end{align*}
	Take $q \equiv 1$ and use (i) and (ii) to obtain
	\begin{align*}
	\lambda_1^\kappa \leq \dfrac{-\int\limits_{M_{\kappa}} f^\prime (U_\kappa)  dV_{g^\kappa}}{  \int\limits_{M_{\kappa}}  dV_{g^\kappa}}&=\dfrac{-\int\limits_{M_{\kappa}} \left(f^\prime (U_\kappa)-f^\prime(U_g) \right) dV_{g^\kappa}- \int\limits_{M_{\kappa}} f^\prime (U_g) dV_{g^\kappa}}{  \int\limits_{M_{\kappa}} dV_{g^\kappa}}\\[0.25cm]
	&\leq \epsilon_2(\kappa)+ \dfrac{\Bigl|\int\limits_{M_{\kappa}} f^\prime (U_g)  dV_{g^\kappa }\Bigr|}{  \int\limits_{M_{\kappa}} dV_{g^\kappa}} \\[0.25cm]
	&\leq \epsilon_2(\kappa)+\dfrac{(1+\epsilon_2(\kappa))\int\limits_{D} |f^\prime (U_g)|  dV_{g}}{(1-\epsilon_2(\kappa))  \int\limits_{D}  dV_{g}} \\[0.25cm]
	&\leq \epsilon_2(\kappa)+\dfrac{1+\epsilon_2(\kappa)}{1-\epsilon_2(\kappa)  }\max_D |f^\prime(U_g)|.
	\end{align*}
	Hence, $\lambda_1^\kappa$ is bounded from above.
	
	Next, we  estimate $ \int\limits_{M_{\kappa}} f^\prime (U_\kappa) (\phi^\kappa)^2 dV_{g^\kappa}$. 
	\begin{align*}
	\int\limits_{M_{\kappa}} f^\prime (U_\kappa) (\phi^\kappa)^2 dV_{g^\kappa} &= \int\limits_{M_{\kappa}} \left(f^\prime (U_\kappa)-f^\prime(U_g)\right) (\phi^\kappa)^2 dV_{g^\kappa} +  \int\limits_{M_{\kappa}}f^\prime (U_g) (\phi^\kappa)^2 dV_{g^\kappa}  \\[0.25cm]
	&\leq \epsilon_2(\kappa) \int\limits_{M_{\kappa}} (\phi^\kappa)^2 dV_{g^\kappa}+ \int\limits_{M_{\kappa}} |f^\prime (U_g)| (\phi^\kappa)^2 dV_{g^\kappa} \\[0.25cm]
	&\leq \epsilon_2(\kappa) + \max_D |f^\prime (U_g)|.
	\end{align*}
	Then, \eqref{apa} yields the conclusion.
\end{proof}

\begin{lemma}\label{uni}
	 $\lambda_1^\kappa \rightarrow \lambda_1 \textrm{ as }  \kappa \rightarrow 0$.
\end{lemma}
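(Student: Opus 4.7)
The plan is to establish the two-sided inequality $\limsup_{\kappa\to 0}\lambda_1^\kappa\le \lambda_1\le \liminf_{\kappa\to 0}\lambda_1^\kappa$ via the Rayleigh quotient characterisations \eqref{ev} and \eqref{ev1}, combined with Lemmas \ref{exist}, \ref{int}, \ref{bounded}. Throughout I regard $\phi$ and $\phi^\kappa$ as functions on the fixed coordinate domain $I=[0,l]\times S^1$.

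For the upper bound, I use the unperturbed eigenfunction $\phi$ from \eqref{egval*} as a test function in \eqref{ev1}. By Lemma \ref{int}(i)--(iii) and Lemma \ref{exist}, the numerator $\int_{M_\kappa}(|\nabla_{g^\kappa}\phi|^2-f^\prime(U_\kappa)\phi^2)\,dV_{g^\kappa}$ and the denominator $\int_{M_\kappa}\phi^2\,dV_{g^\kappa}$ converge, respectively, to $\int_D(|\nabla_g\phi|^2-f^\prime(U_g)\phi^2)\,dV_g=\lambda_1$ and $\int_D\phi^2\,dV_g=1$, because all relevant terms involve only products of $\phi$, $\nabla\phi$, $f^\prime(U_\kappa)$, and the metric coefficients, each of which is controlled by $\epsilon_2(\kappa)\to 0$. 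This gives $\limsup_{\kappa\to 0}\lambda_1^\kappa\le \lambda_1$.

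For the lower bound, Lemma \ref{bounded} combined with Lemma \ref{int}(ii)--(iii) yields a uniform bound on $\|\phi^\kappa\|_{H^1(I)}$ (with respect to the fixed metric $g$). Passing to a subsequence, $\phi^\kappa\rightharpoonup \phi_0$ weakly in $H^1(I)$ and strongly in $L^2(I)$, with $\phi_0\ge 0$. From $\|\phi^\kappa\|_{L^2(M_\kappa)}=1$ and Lemma \ref{int}(ii), one deduces $\|\phi^\kappa\|_{L^2(D)}\to 1$, hence $\|\phi_0\|_{L^2(D)}=1$ and in particular $\phi_0\not\equiv 0$. For the $f^\prime$ term, using Lemma \ref{int}(i)--(ii) and strong $L^2$ convergence,
\[
\int_{M_\kappa}f^\prime(U_\kappa)(\phi^\kappa)^2\,dV_{g^\kappa}\ \longrightarrow\ \int_D f^\prime(U_g)\phi_0^{\,2}\,dV_g.
\]
For the gradient term, Lemma \ref{int}(ii)--(iii) gives
\[
\int_{M_\kappa}|\nabla_{g^\kappa}\phi^\kappa|^2\,dV_{g^\kappa}\ \ge\ (1-\epsilon_2(\kappa))^2\int_D|\nabla_g\phi^\kappa|^2\,dV_g,
\]
and weak lower semicontinuity of the $L^2(D,g)$-norm of $\nabla_g$ applied to $\phi^\kappa\rightharpoonup\phi_0$ in $H^1$ yields $\liminf_{\kappa\to 0}\int_D|\nabla_g\phi^\kappa|^2\,dV_g\ge \int_D|\nabla_g\phi_0|^2\,dV_g$. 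Combining these with \eqref{apa} and the variational characterisation \eqref{ev} gives
\[
\liminf_{\kappa\to 0}\lambda_1^\kappa\ \ge\ \int_D\bigl(|\nabla_g\phi_0|^2-f^\prime(U_g)\phi_0^{\,2}\bigr)dV_g\ \ge\ \lambda_1\,\|\phi_0\|_{L^2(D)}^2=\lambda_1.
\]
Since the argument applies to every subsequence, the full limit exists and equals $\lambda_1$.

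The main obstacle I expect is correctly matching volume forms and metric tensors between $g$ and $g^\kappa$ when passing to the limit in the gradient term: one must absorb the $\kappa$-dependence of $dV_{g^\kappa}$ and of the coefficient $(g^\kappa_{11})^{-1}$ using Lemma \ref{int}(ii)--(iii) before invoking weak lower semicontinuity in the \emph{fixed} Hilbert space $H^1(D,g)$. A secondary point is verifying that $\phi_0\not\equiv 0$, which ultimately reduces to the $L^2$-normalisation of $\phi^\kappa$ and the uniform equivalence of the volume forms provided by Lemma \ref{int}(ii); once this is secured, the lower bound closes cleanly.
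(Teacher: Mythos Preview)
Your proof is correct, and the upper bound argument (testing $\phi$ in \eqref{ev1}) is identical to the paper's. The lower bound, however, is handled differently. You extract a weakly convergent subsequence of $(\phi^\kappa)$ in $H^1(D,g)$, invoke Rellich--Kondrachov for strong $L^2$ convergence, and then appeal to weak lower semicontinuity of the Dirichlet energy together with the variational principle \eqref{ev}. The paper instead avoids compactness entirely: it plugs $\phi^\kappa$ itself as a test function into \eqref{ev}, and uses Lemmas~\ref{int} and~\ref{bounded} to show that the resulting Rayleigh quotient on $D$ differs from $\lambda_1^\kappa$ by a quantity $\epsilon_3(\kappa)\to 0$, yielding $\lambda_1\le \lambda_1^\kappa+\epsilon_3(\kappa)$ directly. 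This is more elementary (no subsequences, no lower semicontinuity) and even gives a quantitative rate. Your approach, on the other hand, is the standard soft argument that would continue to work in settings where a pointwise comparison of Rayleigh quotients is less transparent. One minor remark: Lemma~\ref{int}(iii) is stated only for $\phi^\kappa$, but its proof depends solely on the metric coefficients, so applying it to $\phi$ in your upper bound is legitimate.
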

\begin{proof}
Since $\lambda_1$ is the principal eigenvalue (see \eqref{ev}), we have
	\begin{align*}
	\lambda_1 \leq \dfrac{\int\limits_D \left( |\nabla_{g} \phi^\kappa|^2  - f^\prime (U_g) (\phi^\kappa)^2\right)  dV_g }{ \int\limits_D (\phi^\kappa)^2 dV_{g}}.
	\end{align*}
	Observe that
	\begin{align}
	&\biggl|\int\limits_D f^\prime (U_g) (\phi^\kappa)^2 dV_{g}- \int\limits_{M_\kappa}f^\prime (U_\kappa) (\phi^\kappa)^2 dV_{g^\kappa}\biggr| \nonumber \\[0.25cm]
	& \leq\biggl|\int\limits_D \left(f^\prime (U_g)-f^\prime(U_\kappa)\right) (\phi^\kappa)^2 dV_{g} \biggr| 
	+\biggl|\int\limits_{D} f^\prime (U_\kappa) (\phi^\kappa)^2dV_{g}-\int\limits_{M_\kappa} f^\prime (U_\kappa) (\phi^\kappa)^2dV_{g^\kappa}\biggr|.\label{if}
	\end{align}
Then, by using (i) and (ii) in Lemma \ref{int} we obtain
\begin{align}
\biggl| \int\limits_D \left(f^\prime (U_g)-f^\prime(U_\kappa)\right)(\phi^\kappa)^2 dV_{g}\biggr| &\leq \epsilon_2(\kappa) \int\limits_{D} (\phi^\kappa)^2 dV_{g} \nonumber \\[0.25cm] &\leq  \dfrac{\epsilon_2(\kappa)}{1-\epsilon_2(\kappa)}\int\limits_{M_\kappa} (\phi^\kappa)^2 dV_{g^\kappa}=\dfrac{\epsilon_2(\kappa)}{1-\epsilon_2(\kappa)}.\label{e1}
\end{align}
Since $\left| dV_{g^\kappa}-dV_{g}\right| \leq \dfrac{\epsilon_2(\kappa)}{1-\epsilon_2(\kappa)} dV_{g^\kappa} $ by (ii),
\begin{flalign}\label{e2}
\biggl|\int\limits_{D} f^\prime (U_\kappa) (\phi^\kappa)^2dV_{g}-\int\limits_{M_\kappa} f^\prime (U_\kappa) (\phi^\kappa)^2dV_{g^\kappa}\biggr|&\leq\dfrac{\epsilon_2(\kappa)}{1-\epsilon_2(\kappa)} \int\limits_{M_\kappa} |f^\prime(U_\kappa)|(\phi^\kappa)^2dV_{g^\kappa}& \nonumber\\[0.25cm]
&\leq\dfrac{\epsilon_2(\kappa)}{1-\epsilon_2(\kappa)} \left(\epsilon_2(\kappa)+ \| f^\prime (U_g)\|_{\infty}\right),&
\end{flalign}
where we used (i) in the last inequality. Thus, combining \eqref{e1} and  \eqref{e2} with \eqref{if} yields  \begin{equation}\label{f1}
\biggl|\int\limits_D f^\prime (U_g) (\phi^\kappa)^2 dV_{g}- \int\limits_{M_\kappa}f^\prime (U_\kappa) (\phi^\kappa)^2 dV_{g^\kappa}\biggr| \leq \dfrac{\epsilon_2(\kappa)}{1-\epsilon_2(\kappa)} \left(1+\epsilon_2(\kappa)+\| f^\prime (U_g)\|_{\infty}\right).
\end{equation}
Observe that
\begin{align} \label{f2}
&\biggl| \int\limits_{D} |\nabla_{g} \phi^\kappa|^2 dV_g - \int\limits_{M_\kappa}|\nabla_{g^\kappa} \phi^\kappa|^2dV_{g^\kappa}\biggr| \nonumber \\[0.25cm] &\leq \biggl| \int\limits_{D} |\nabla_{g} \phi^\kappa|^2 dV_g - \int\limits_{M_\kappa}|\nabla_{g} \phi^\kappa|^2dV_{g^\kappa}\biggr| 
+\biggl| \int\limits_{M_\kappa} |\nabla_{g} \phi^\kappa|^2 dV_{g^\kappa} - \int\limits_{M_\kappa}|\nabla_{g^\kappa} \phi^\kappa|^2 dV_{g^\kappa}\biggr| .
\end{align}
Then, by using (ii) and (iii) in Lemma \ref{int}, and Lemma \ref{bounded} , we obtain
\begin{flalign}
\biggl| \int\limits_{D} |\nabla_{g} \phi^\kappa|^2 dV_g - \int\limits_{M_\kappa}|\nabla_{g} \phi^\kappa|^2dV_{g^\kappa}\biggr| &\leq \dfrac{\epsilon_2(\kappa)}{1-\epsilon_2(\kappa)} \int\limits_{M_{\kappa}} |\nabla_{g} \phi^\kappa|^2 dV_{g^\kappa} \nonumber& \\[0.25cm] 
&\leq \dfrac{\epsilon_2(\kappa)}{(1-\epsilon_2(\kappa))^2} \int\limits_{M_{\kappa}} |\nabla_{g^\kappa} \phi^\kappa|^2 dV_{g^\kappa}&  \nonumber\\[0.25cm]
&\leq \dfrac{\epsilon_2(\kappa)}{(1-\epsilon_2(\kappa))^2}C^*. \label{e3}& 
\end{flalign}
By using (iii), we have
\begin{flalign*}
\Big| |\nabla_{g} \phi^\kappa|^2 -|\nabla_{g^\kappa} \phi^\kappa|^2 \Big| \leq \epsilon_2(\kappa) |\nabla_{g} \phi^\kappa|^2 \leq \dfrac{\epsilon_2(\kappa)}{1-\epsilon_2(\kappa)} |\nabla_{g^\kappa} \phi^\kappa|^2.
\end{flalign*}
Hence, we obtain from Lemma \ref{bounded} that 
\begin{align}
\label{e4}\biggl| \int\limits_{M_\kappa} |\nabla_{g} \phi^\kappa|^2 dV_{g^\kappa} - \int\limits_{M_\kappa}|\nabla_{g^\kappa} \phi^\kappa|^2 dV_{g^\kappa}\biggr| % &\leq \dfrac{\epsilon_2(\kappa)}{(1-\epsilon_2(\kappa))}\int\limits_{M_\kappa}|\nabla_{g^\kappa} \phi^\kappa|^2dV_{g^\kappa} \nonumber&\\[0.25cm]
\leq \dfrac{\epsilon_2(\kappa)}{1-\epsilon_2(\kappa)}C^*.
\end{align}
Thus, combining \eqref{e3} and \eqref{e4} with \eqref{f2} yields
\begin{equation}\label{f3}
\biggl| \int\limits_{D} |\nabla_{g} \phi^\kappa|^2 dV_g - \int\limits_{M_\kappa}|\nabla_{g^\kappa} \phi^\kappa|^2dV_{g^\kappa}\biggr|  \leq \dfrac{\epsilon_2(\kappa)C^*(2-\epsilon_2(\kappa))}{(1-\epsilon_2(\kappa))^2}.
\end{equation}
By (ii) in Lemma \ref{int}, the following inequality holds:
\begin{align}\label{e5}
%\label{ev2}&\Big| |\nabla_{g} \phi^\kappa|^2 dV_g -|\nabla_{g^\kappa} \phi^\kappa|^2dV_{g^\kappa}\Big| \leq \epsilon_4(\kappa),\\[0.25cm]
 \biggl| \int\limits_D (\phi^\kappa)^2 dV_{g}-1 \biggr| &=\biggl| \int\limits_D (\phi^\kappa)^2 dV_{g}-\int\limits_{M_\kappa} (\phi^\kappa)^2 dV_{g^\kappa} \biggr| \nonumber \\[0.25cm]
 &\leq \dfrac{\epsilon_2(\kappa)}{1-\epsilon_2(\kappa)} \int\limits_{M_\kappa} (\phi^\kappa)^2 dV_{g^\kappa} =\dfrac{\epsilon_2(\kappa)}{1-\epsilon_2(\kappa)}.
\end{align}
By using \eqref{f1}, \eqref{f3}  and  \eqref{e5}, we conclude that there exists $\epsilon_3(\kappa) > 0$ with $\lim\limits_{\kappa \rightarrow 0} \epsilon_3(\kappa)=0$ such that
\begin{equation}\label{l1}
 \lambda_1 \leq \lambda_1^\kappa +\epsilon_3(\kappa).
\end{equation}

Since $\lambda_1^\kappa$ is the principal eigenvalue  (see \eqref{prin}), we have
\begin{align*}
\lambda_1^\kappa \leq \dfrac{\int\limits_{M_\kappa} \left( |\nabla_{g} \phi|^2  - f^\prime (U_\kappa) (\phi)^2\right)  dV_{g^\kappa }}{ \int\limits\limits_{ M_\kappa} (\phi)^2 dV_{g^\kappa}}.
\end{align*}
By the same argument as above,  we can show that there exists $\epsilon_4(\kappa
)>0$ with $\lim\limits_{\kappa \rightarrow 0} \epsilon_4(\kappa)=0$ satisfying
\begin{equation}\label{l2}
\lambda_1^\kappa \leq \lambda_1 + \epsilon_4(\kappa).
\end{equation}
Combining \eqref{l1} and \eqref{l2} yields the conclusion.
\end{proof}

Now we are in position to prove Theorem \ref{main}.
\begin{proof}[Proof of  Theorem \ref{main}]
	Theorem \ref{main} is a consequence of Lemma \ref{exist} and Lemma \ref{uni}. According to Lemma \ref{exist}, the stationary solution $U_\kappa$ exists in the neighborhood of $\kappa=0$. By Lemma \ref{uni}, for sufficiently small  $|\kappa|$ we have that $\lambda_1^\kappa > 0$ which leads to the stability of $U_\kappa$.
\end{proof}
%%%%%%%%%%%%%%%%%%%%%%%%%%%%%
%%%%%%%%%%%%%%%%%%%%%%%%%%%%%
%%%%%%   Last Proof    %%%%%%%%%%%%%%%%
%%%%%%%%%%%%%%%%%%%%%%%%%%%%%
%%%%%%%%%%%%%%%%%%%%%%%%%%%%%

\subsection{Proof of Theorem \ref{mt}}
Theorem \ref{main} yields that for $|\kappa| \in (0,\kappa_0)$  there exists a pattern $U_\kappa$ of \eqref{rdn}  on $M_{\kappa}$.  Moreover, by the construction in section 3,  there exist closed surfaces $M$ having genus $1$  for every appropriate  $|\kappa| \in (0,\kappa_0)$.  For such a $M$, let us define the stationary solution $U$ of \eqref{eq1} by
\begin{equation*}
U(x)=U_\kappa(x) \quad \textrm{for } x \in M_\kappa,
\end{equation*}
since each $U_\kappa$ satisfies the Neumann boundary condition on each $\partial M_\kappa$. We notice that $U$ is also symmetric with respect to each component of each $\partial M_\kappa$
\begin{lemma}\label{main2}
 $U$ is stable on $M$. 
\end{lemma}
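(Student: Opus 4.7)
The plan is to show that the principal eigenvalue of the linearized problem on the closed surface $M$ is positive, by exploiting the reflection symmetries of $M$ and $U$ built into the construction in Section 3 and reducing the eigenvalue problem on $M$ to the Neumann eigenvalue problem on a single copy of $M_\kappa$, which is controlled by Lemma \ref{uni}.

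First I would set up the principal eigenvalue problem on the closed surface:
\begin{equation*}
\Delta_{g_M}\Phi + f'(U)\Phi = -\Lambda_1\Phi \quad \text{on } M, \qquad \|\Phi\|_{L^2(M)}=1,\ \Phi>0,
\end{equation*}
which is well-posed on the closed $C^3$ surface $M$ since $U \in C^{2,\alpha}(M)$ by Lemma \ref{exist} and the gluing. The existence and uniqueness (up to sign) of a positive principal eigenfunction is standard. My goal is to prove $\Lambda_1 = \lambda_1^\kappa > 0$, from which stability follows by the standard linearized-stability criterion.

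Next I would exploit symmetry. By the last sentence of the construction in Section 3, $M$ is invariant under the reflection $\sigma$ across any hyperplane $H$ that contains a boundary circle of any copy $M_\kappa$ used to build $M$. The solution $U$ is symmetric under each such $\sigma$ by definition, since it is given by identical copies of $U_\kappa$ on adjacent pieces. Therefore $f'(U) \circ \sigma = f'(U)$, so $\Phi_1 \circ \sigma$ is again a positive $L^2$-normalized eigenfunction of the same operator for the eigenvalue $\Lambda_1$. By uniqueness of the principal eigenfunction, $\Phi_1 \circ \sigma = \Phi_1$, i.e.\ $\Phi_1$ is symmetric across every such hyperplane $H$. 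The standard consequence of reflection symmetry across a smooth hypersurface is that the normal derivative of $\Phi_1$ vanishes on $H \cap M$, i.e.\ on every joining circle of the construction. In particular, the restriction of $\Phi_1$ to any single piece $M_\kappa$ satisfies $\partial\Phi_1/\partial\nu = 0$ on $\partial M_\kappa$.

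Then I would conclude by comparing with the Neumann problem \eqref{prin}. The restriction $\Phi_1|_{M_\kappa}$ is a positive eigenfunction of
$\Delta_{g^\kappa} q + f'(U_\kappa)q = -\Lambda_1 q$ on $M_\kappa$ with Neumann boundary conditions, and after normalizing in $L^2(M_\kappa)$ it is the principal Neumann eigenfunction. By uniqueness of the principal eigenvalue, $\Lambda_1 = \lambda_1^\kappa$. Lemma \ref{uni} together with the assumption $\lambda_1 > 0$ (which is the stability of $U_g$ from Theorem \ref{main1}) gives $\lambda_1^\kappa > 0$ for all sufficiently small $|\kappa|$, hence $\Lambda_1 > 0$ and $U$ is stable on $M$.

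The main obstacle I expect is the careful justification that reflection symmetry of $\Phi_1$ actually produces the Neumann condition across the joining circles in the sense required to restrict to a solution of \eqref{prin}. Concretely, one needs that $\Phi_1$ is $C^1$ across each joining circle (so that the normal derivative is meaningful and vanishes), which follows from the $C^3$ smoothness of $M$ established in Steps 3--5 of Section 3 and elliptic regularity for $\Phi_1$; and one needs that $U \in C^{2,\alpha}(M)$ globally, which follows from condition \ref{i3} in Section 3 forcing $U_\kappa$ to match smoothly across each boundary circle. Once these regularity points are in place, the symmetry-to-Neumann reduction is immediate and the lemma follows.
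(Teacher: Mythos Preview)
Your argument is correct, but it is genuinely different from the paper's.

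The paper does \emph{not} compute the principal eigenvalue on $M$. Instead, it proves Lyapunov stability directly by a parabolic comparison argument: given $\epsilon>0$, it uses the (already known) Lyapunov stability of $U_\kappa$ on $M_\kappa$ to find $\delta>0$; for arbitrary initial data $u_0$ with $\|u_0-U\|_{\infty,M}<\delta$ it introduces the symmetric barriers $u_0^\pm := U\pm\delta$. Because $u_0^\pm$ are symmetric with respect to every reflection hyperplane of the construction, the corresponding solutions $u^\pm$ of \eqref{eq1} on $M$ remain symmetric for all $t>0$, hence satisfy the Neumann problem \eqref{rdn} on each piece $M_\kappa$, and therefore stay within $\epsilon$ of $U$ by stability of $U_\kappa$. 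The comparison principle then traps $u$ between $u^-$ and $u^+$, giving $\|u(\cdot,t)-U\|_{\infty,M}<\epsilon$.

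Your route is spectral: you use the reflection symmetries to force the positive principal eigenfunction $\Phi_1$ on $M$ to be symmetric, hence Neumann on each $\partial M_\kappa$, so its restriction is a positive Neumann eigenfunction on $M_\kappa$ and thus $\Lambda_1=\lambda_1^\kappa>0$. This yields the extra information $\Lambda_1=\lambda_1^\kappa$, but it needs the linearized-stability criterion on the closed manifold (positive principal eigenvalue $\Rightarrow$ Lyapunov stability in $L^\infty$), which is standard (cf.\ \cite{eigen}) but is a nontrivial black box. The paper's argument avoids any spectral analysis on $M$ and uses only the comparison principle together with the nonlinear stability of $U_\kappa$ as input; in exchange it says nothing about $\Lambda_1$. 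Both approaches hinge on the same structural fact---the reflection symmetry built in Section~3---applied either to $\Phi_1$ (your version) or to $u^\pm$ (the paper's version).
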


\begin{proof}
Choose $\epsilon >0$. Then, since $U_\kappa$ is stable on $M_\kappa$, there exists $\delta > 0$ such that for each initial data $u_\kappa(\cdot,0) \in L^\infty(M_{\kappa})$ with 
\begin{equation*}
\Vert u_\kappa(\cdot,0) - U_\kappa \Vert_{\infty, M_{\kappa}} < 2 \delta,
\end{equation*}
the solution $u_\kappa \in C^{2,1}(M_{g^\kappa} \times (0, \infty)) \cap C([0,\infty], L^1(M_\kappa))$ of \eqref{rdn} satisfies
\begin{equation}\label{stab}
\Vert u_\kappa(\cdot,t) - U_\kappa\Vert_{\infty, M_{\kappa}} < \epsilon\ \mbox{ for every } t > 0.
\end{equation}

Let $u_0 \in L^\infty(M)$  satisfies $
\Vert u_0 - U \Vert_{\infty, M} < \delta.
$
Set $u^+_0 :=U+ \delta $ and $u^-_0 :=U-\delta $ on $M$ so that the following inequalities hold: 
\begin{equation*}
u^-_0 < u_0 < u^+_0 \quad \textrm{on } M_\kappa, 
\end{equation*}
\begin{equation*}
\Vert u^+_0 - U \Vert_{\infty, M} =\delta<2\delta,
\quad \textrm{and} \quad
\Vert u^-_0 -U \Vert_{\infty, M}=\delta<  2\delta.
\end{equation*}
Let $ u^+,u^- \in  C(M \times [0,\infty)) \cap  C^{2,1}(M \times (0,\infty))$ be the solutions of \eqref{eq1} on $M$  with initial data $u^+_0, u^-_0$, respectively. Since both  $u^+_0$ and $u^-_0$ are symmetric with respect to each component of each $\partial M_\kappa$,  both $u^+$ and $u^-$ are also symmetric in the same manner for every $t>0$. Hence both $u^+$ and $u^-$ satisfy \eqref{rdn} where $D$ is replaced with each $M_\kappa$. Thus, by \eqref{stab},
\begin{align}\label{stab2}
\Vert u^+ (\cdot,t) - U \Vert_{\infty, M} <  \epsilon \  \mbox{ and } \ 
\Vert u^-(\cdot,t) -U \Vert_{\infty, M} <  \epsilon \ \mbox{ for every } t > 0.
\end{align}
 Since $u^-_0 < u_0 < u^+_0$ on $M$,   by the comparison principle there exists a unique solution $u  \in C^{2,1}(M \times (0, \infty)) \cap C([0,\infty), L^1(M))$ of \eqref{eq1} on $M$  with initial data $u_0$ and it satisfies
 $u^-< u < u^+$ on $M \times [0,\infty)$. Therefore it follows from \eqref{stab2}  that
\begin{equation*}
\Vert u(\cdot, t) - U\Vert_{\infty, M} < \epsilon\ \mbox{ for every } t > 0,
\end{equation*}
which shows that $U$ is stable  on $M$.  \end{proof}
               
   Recall that $\dfrac{\partial U_\kappa}{\partial \nu}=0$ on $\partial M_\kappa$. Then there exist at least two critical points of $U$  on each component of each $\partial M_{\kappa}$.   Therefore we infer that $U$ has at least $4n$ critical points with $ n >  \dfrac{\pi}{\gamma\kappa_0} $ where  $\gamma$ is the length of $C(\kappa)$, since $U$ is symmetric with respect to each hyperplane containing each component of each $\partial M_\kappa$.  This completes the proof of Theorem \ref{mt}.
%%%%%%%%%%%%%%%%%%%%%%%%%%%%%
%%%%%%%%%%%%%%%%%%%%%%%%%%%%%
%%%%%%    References     %%%%%%%%%%%%%%%%
%%%%%%%%%%%%%%%%%%%%%%%%%%%%%
%%%%%%%%%%%%%%%%%%%%%%%%%%%%%	
	 

\begin{thebibliography}{BPT}
	\bibitem[BPT]{bandel} C.~Bandle, F.~Punzo and A.~Tesei, Existence and nonexistence of patterns on Riemannian manifolds, J. Math. Anal. Appl. 387 (2012), 33--47.
	\bibitem[BL]{below}  J.~V.~Below and  J.~A.~Lubary, Instability of stationary solutions of reaction-diffusion-equations on graphs, Results. Math. 68 (2015), 171--201.	
	\bibitem[D]{Deimling} K.~Deimling, Nonlinear Functional Analysis, Dover Publications, 2010.
	\bibitem[DO]{do} Manfredo P. do Carmo,  Differential Geometry of Curves and Surfaces, Dover Publications, 2016.
	\bibitem[FSV]{farina} A.~Farina, Y.~Sire and E.~Valdinoci, Stable solutions of elliptic equations on Riemannian manifolds, J. Geometric Analysis 23 (2013), 1158--1172.
	\bibitem[GT]{buku} D.~Gilbarg and N.~S.~Trudinger, Elliptic Partial Differential Equations of Second Order (2nd ed.). Springer, New York, 1983.
	\bibitem[H]{eigen} D.~Henry, Geometric Theory of Semilinear Parabolic Equations, Springer Lect. Notes Math., vol. 840, Springer, 1981.
	\bibitem[J]{jimbo} S.~Jimbo, On a semilinear diffusion equation on a Riemannian manifold and its stable equilbrium solutions, Proc. Japan Acad., 60, Ser. A (1984), 349--352.
	%\bibitem[L]{buku1} G.~M.~Lieberman, Oblique Derivative Problems for Elliptic Equations, World Scientific, Singapore, 2013.
	\bibitem[M]{matano} H.~Matano, Asymptotic behavior and stability of solutions of semilinear diffusion equations, Publ. RIMS, Kyoto Univ., 15 (1979), 401--454.
	\bibitem[NG]{nascimento} A.~S.~Nascimento and A.~C.~Gon\c calves, Instability of elliptic equations on compact Riemannian manifolds with non-negative Ricci curvature, Electron. J. Differential Equations 67 (2010), 1--18.
	\bibitem[N]{nirenberg} L.~Nirenberg, Topics in Nonlinear Functional Analysis, Revised reprint of the 1974 original,  Courant Lecture Notes in Mathematics, 6, American Mathematical Society, Providence, RI, 2001. 
    \bibitem[P]{punzo} F.~Punzo, The existence of patterns on surfaces of revolution without boundary, Nonlinear Anal. 77 (2013), 94--102.
    \bibitem[RW]{rubinstein} J.~Rubinstein and G.~Wolansky, Instability results for reaction diffusion equations over surfaces of revolutions,  J. Math. Anal. Appl. 187 (1994), 485--489.
    \bibitem[S]{sonego} M.~Sonego, Stability result of a reaction-diffusion problem with mixed boundary conditions and applications to some symmetric cases, J. Math. Anal. Appl. 466 (2018), 1190--1210.
    \bibitem[Y]{yanagida} E.~Yanagida, Stability of stationary distributions in a space-dependent population growth process, J. Math. Biology, 15 (1982), 37--50.
         
\end{thebibliography}
\end{document}